\definecolor{Red}{cmyk}{0,1,1,0}
\definecolor{verde}{cmyk}{1,0,1,0}
\definecolor{loka}{cmyk}{.5,0,1,.5}
\definecolor{azul}{cmyk}{1,1,0,0}
\numberwithin{equation}{section}
\newcommand{\be}{\begin{equation}}
\newcommand{\ee}{\end{equation}}
\newtheorem{definition}{Definition}
\newtheorem{teorema}{Theorem}
\begin{document}
\title{On the existence and stability for non-instantaneous impulsive fractional integrodifferential equation}
\author{J. Vanterler da C. Sousa$^1$}
\address{$^1$ Department of Applied Mathematics, Institute of Mathematics,
 Statistics and Scientific Computation, University of Campinas --
UNICAMP, rua S\'ergio Buarque de Holanda 651,
13083--859, Campinas SP, Brazil\newline
Coordination of Civil Engineering, Technological Federal University of Parana, 85053- ´
525, Guarapuava/PR
e-mail: {\itshape \texttt{ra160908@ime.unicamp.br, oliveiradaniela@utfpr.edu.br, capelas@ime.unicamp.br. }}}
\author{D. Santos Oliveira$^2$}
\author{E. Capelas de Oliveira$^1$}

\begin{abstract} In this paper, by means of Banach fixed point theorem, we investigate the existence and Ulam--Hyers--Rassias stability of the non-instantaneous impulsive integrodifferential equation by means of
$\psi$-Hilfer fractional derivative. In this sense, some examples are presented, in order to consolidate the results obtained.

\vskip.5cm
\noindent
\emph{Keywords}: Non-instantaneous impulsive integrodifferential equation; $\psi$-Hilfer fractional derivative;
Existence; Ulam--Hyers--Rassias stability; Banach fixed point.
\newline 
\end{abstract}
\maketitle


\section{Introduction}\label{sec1}

The study of impulsive differential and integrodifferential equations has been
the subject of several papers, in particular, equations involving fractional 
differential and fractional integrodifferential 
operators\cite{exis,exis1,exis2,esto1,esto2,esto3,gronwall}.
However, there are still
few works on the existence, uniqueness and stability of solutions of fractional
differential equations, in particular equations involving the Hilfer fractional
derivative. The many works available usually employ the Caputo
fractional derivative and the Riemann-Liouville fractional derivative
\cite{exis1,exis2,esto3,exis7,exis8,esto8}. With the extension and emergence of
new fractional derivatives of an even number $N$ of variables, one possible way 
to carry new studies of existence, uniqueness, and Ulam--Hyers and 
Ulam--Hyers--Rassias
stabilities, is to use more general fractional derivatives \cite{ZE1}. 

The first ideas about Ulam--Hyers stability began in 1941 with Ulam and Hyers
\cite{rassias,rassias3,rassias4}, from a response by Hyers to a problem
proposed by Ulam, stating the result that is now known as the Ulam--Hyers
stability theorem. In studying stability problems we are trying to answer the
following question: When can we say that the solutions of an inequality are
close to an exact solution of the corresponding equation? Until recently, this
type of question had been raised only for equations involving integer-order
differential operators \cite{rassias3,rassias4}. 

With the expansion of fractional calculus and the increasing number of
researchers working with fractional derivatives and integrals, the
studies about the existence, uniqueness, attractivity and stability of
solutions of fractional differential equations have also grown in number
\cite{exis,exis1,exis2,esto1,esto2,esto3,exis7,exis8,esto8,exis4,exis6,esto5,esto7}.

We can highlight here the investigation by Wang et al. \cite{wang2017}
on a new class of switched systems of differential equations with nonlocal and
impulsive initial conditions, using the Caputo derivative, by means of fixed
point methods. On the other hand, Yang et al. \cite{yang2018}, using the Caputo
fractional derivative, extended some results on the Hausdorff orbital
dependence of solutions of nonlinear differential non-instantaneous equations.
We must also cite the work by Wang et al. \cite{wang2014} in which the authors
investigated the generalized Ulam--Hyers--Rassias and Ulam--Hyers--Rassias
stabilities of a new class of non-instantaneous fractional differential
equation solutions.

Recently, Sousa and Oliveira introduced the so-called $\psi$-Hilfer fractional
derivative, of major importance to fractional calculus as it contains, as
particular cases, a wide class of other fractional derivatives which,
consequently, will have the same properties of the more general fractional
derivative \cite{ZE1}. In this sense, there are already some works on the 
existence, uniqueness and Ulam--Hyers type stability for some classes of equations involving the
$\psi$-Hilfer fractional derivative \cite{ZE3,ZE4,ZE6,ZE7}. 

In this new work we present some results on the existence, uniqueness and
stability of the solutions for a non-instantaneous impulsive fractional integrodifferential
equation in order to consolidate a previous work \cite{ZE1} and to strengthen
the theory of stability within the unified mathematical analysis 
fractional calculus \cite{HER}.

Consider the fractional integrodifferential equation 
\begin{equation}\label{1}
\left\{ 
\begin{array}{ccl}
^{H}\mathbb{D}_{s_{i},0^{+}}^{\alpha ,\beta ,\psi }x\left( t\right)  & = & f\left( t,x(t),\displaystyle\frac{1}{\Gamma (\alpha )}\int_{0}^{t}\mathcal{W}_{\psi }^{\alpha }(t,s,x(s))\,ds\right) ,\text{ }t\in (s_{i},t_{i+1}],\,\,i=0,1,...,m \\ 
x\left( t\right)  & = & g_{i}\left( t,x(t),\displaystyle\frac{1}{\Gamma (\alpha )}\int_{0}^{t}\mathcal{M}_{\psi }^{\alpha }(t,s,x(s))ds\right) ,\,\,\,t\in
(t_{i},s_{i}],i=1,...,m
\end{array}
\right.   
\end{equation}
where ${}^{H}\mathbb{D}_{s_{i},0^{+}}^{\alpha ,\beta ,\psi }(\cdot )$ is the $\psi $-Hilfer fractional derivative, whose parameters satisfy $0<\alpha\leq 1$, $0\leq \beta \leq 1$ \cite{ZE1}.

To simplify the notation we introduce $\mathcal{W}_{\psi }^{\alpha
}(t,s,x(s)):=N_{\psi }^{\alpha }\left( t,s\right) K\left(s,x\left( s\right)
\right) $ and $\mathcal{M}_{\psi }^{\alpha }(t,s,x(s)):=N_{\psi }^{\alpha
}\left( t,s\right) \ell\left( s,x\left( s\right) \right) $ where $N_{\psi
}^{\alpha }(t,s)=\psi'(s)(\psi(t)-\psi(s))^{\alpha-1}$ and
$0=t_{0}=s_{0}<t_{1}\leq s_{1}\leq t_{2}<\cdots <t_{m}\leq s_{m}<t_{m+1}=T$ are
fixed numbers. Also, $f:[0,T]\times \mathbb{R}\times \mathbb{R}\rightarrow
\mathbb{R}$ is continuous; $K,\ell :[0,T]\times \mathbb{R}\rightarrow
\mathbb{R}$ are continuous and $g_{i}:[t_{i},s_{i}]\times \mathbb{R}\times
\mathbb{R}\rightarrow \mathbb{R}$ is continuous for all $i=1,2,\ldots ,m$ which
is called not instantaneous impulse.

When we investigate the existence, uniqueness and Ulam--Hyers type
stability of the solution of a particular integrodifferential equation, we
expect that the results we will obtain will provide contributions to the area.
On the other hand, as it is well known, when we consider non-integer order
derivatives, in some applications, we usually obtain a better description of the
phenomena under study.  Then, the problem of choosing the fractional operator
arises, since the definitions are numerous. Thus, the main motivation for this
work is to present a new class of non-instantaneous impulsive fractional
integrodifferential equations, by means of the  $\psi$-Hilfer fractional derivative
and to investigate the existence and Ulam--Hyers--Rassias type
stability of the solutions of Eq.(\ref{1}), making use of Banach's fixed point
theorem. In this sense, this paper provides new results which are valid for all
possible particular cases of the $\psi$-Hilfer fractional derivative, which is
an advantage of this fractional operator, as the properties of the more general
operator are preserved in the particular cases.

This paper is organized as follows: In section 2 we present some spaces that we
need for the development of the paper. We also present a version of the concept
of Ulam--Hyers--Rassias type stability for the $\psi$-Hilfer fractional
derivative. In section 3, the main results of the paper are presented: the
investigation on the existence and Ulam--Hyers--Rassias
stability of the fractional integrodifferential equation. In section 4, we
present two examples in order to illustrate the results. Concluding
remarks close the paper.


\section{Preliminaries}

Let $J=[0,T]$, $J^{\prime }=(0,T]$ and $C(J,\mathbb{R})$ be the space of all continuous functions from $J$ into $\mathbb{R}$ and $n$-times absolutely continuous set, given by \cite{ZE1}
\begin{equation*}
AC^{n}(J,\mathbb{R})=\left\{ x:J\rightarrow \mathbb{R},x^{(n-1)}\in AC(J,\mathbb{R})\right\}.
\end{equation*}

The weighted space of functions on $J^{\prime }$ is
\begin{equation*}
C_{\gamma ,\psi }(J,\mathbb{R})=\left\{ x:J^{\prime }\rightarrow \mathbb{R},(\psi (t)-\psi (a))^{\gamma }x(t)\in C(J,\mathbb{R})\right\} ,\quad 0\leq \gamma <1.
\end{equation*}

The weighted space of functions on $J^{\prime }$ is
\begin{equation*}
C_{\gamma ,\psi }^{n}(J,\mathbb{R})=\left\{ x:J^{\prime }\rightarrow \mathbb{R},x(t)\in C^{n-1}(J,\mathbb{R}),\text{ }x^{(n)}(t)\in C_{\gamma ,\psi }(J,\mathbb{R})\right\} ,\quad 0\leq \gamma <1.
\end{equation*}

The weighted space is:
\begin{equation*}
C_{\gamma ,\psi }^{\alpha ,\beta }(J,\mathbb{R})=\left\{ x\in C_{\gamma ,\psi }(J,\mathbb{R}),{}^{H}\mathbb{D}_{a^{+}}^{\alpha ,\beta ,\psi }x\in C_{\gamma ,\psi }(J,\mathbb{R})\right\} ,\quad \gamma =\alpha +\beta (1-\alpha ).
\end{equation*}

We also recall the piecewise weighted function space 
\begin{equation*}
PC_{\gamma ,\psi }(J,\mathbb{R})=\left\{ (\psi (t)-\psi (a))^{\gamma}x(t)\in C_{\gamma ,\psi }((t_{k},t_{k+1}],\mathbb{R}),\,\,k\in \lbrack 0,m]\right\} 
\end{equation*}
and there exist $x(t_{k}^{-})$ and $x(t_{k}^{+})$, $k=1,2,\ldots ,m$ with $x(t_{k}^{-})=x(t_{k}^{+})$.

Meanwhile, we set 
\begin{equation*}
PC_{\gamma ,\psi }^{1}(J,\mathbb{R})=\left\{ x\in PC_{\gamma ,\psi }(J,\mathbb{R}),\,\,x^{\prime }\in PC_{\gamma ,\psi }(J,\mathbb{R})\right\} 
\end{equation*}
and 
\begin{equation*}
PC_{\gamma ,\psi }^{\alpha ,\beta }(J,\mathbb{R})=\left\{ x\in PC_{\gamma,\psi }(J,\mathbb{R}),\,\,{}^{H}\mathbb{D}_{0^{+}}^{\alpha ,\beta ,\psi
}x\in PC_{\gamma ,\psi }(J,\mathbb{R})\right\}.
\end{equation*}

The function $x\in PC_{\gamma ,\psi }(J,\mathbb{R})$ is called a classical solution of the fractional impulsive Cauchy problem
\begin{equation}
\left\{ 
\begin{array}{ccl}
^{H}\mathbb{D}_{s_{i},0^{+}}^{\alpha ,\beta ,\psi }x(t) & = & f\left( t,x(t),\displaystyle\frac{1}{\Gamma (\alpha )}\int_{0}^{t}\mathcal{W}_{\psi }^{\alpha
}(t,s,x(s))\,ds\right) ,\,\,t\in (s_{i},t_{i+1}],\text{ }i=0,1,...,m \\ 
x(t) & = & g_{i}\left( t,x(t),\displaystyle\frac{1}{\Gamma (\alpha )}\int_{0}^{t}\mathcal{M}_{\psi }^{\alpha }(t,s,x(s))ds\right) ,\,\,t\in (t_{i},s_{i}],%
\text{ }i=1,...,m \\ 
I_{0^{+}}^{1-\gamma ,\psi }x(0) & = & x_{0}
\end{array}
\right. 
\end{equation}
if $x$ satisfies $I_{0^{+}}^{1-\gamma ,\psi }x(0)=x_{0}$ and 
\begin{equation*}
x(t)=g_{i}\left( t,x(t),\frac{1}{\Gamma (\alpha )}\int_{0}^{t}\mathcal{M}_{\psi}^{\alpha }(t,s,x(s))ds\right) ,\,\,t\in (t_{i},s_{i}],\,\,i=0,1,...,m
\end{equation*}
and
\begin{equation*}
\left\{ 
\begin{array}{cll}
x(t) & = & \Psi ^{\gamma }\left( t,0\right) x_{0}+\displaystyle\frac{1}{\Gamma (\alpha )}\int_{0}^{t}N_{\psi }^{\alpha }(t,s)f\left( s,x(s),\displaystyle\frac{1}{\Gamma (\alpha )}\int_{0}^{s}\mathcal{W}_{\psi }^{\alpha}(s,\tau ,x(\tau ))d\tau \right) ds,\,\,t\in \lbrack 0,t_{1}] \\ 
x(t) & = & 
\begin{array}{l}
\left. \displaystyle\frac{1}{\Gamma (\alpha )}\int_{s_{i}}^{t}N_{\psi}^{\alpha }(t,s)f\left( s,x(s),\displaystyle\frac{1}{\Gamma (\alpha )}\int_{0}^{s}\mathcal{W}_{\psi }^{\alpha }(s,\tau ,x(\tau ))d\tau \right) ds +g_{i}\left( s_{i},x(s_{i}),\displaystyle\frac{1}{\Gamma (\alpha )}\int_{0}^{s_{i}}\mathcal{M}_{\psi }^{\alpha }(t,s,x(s))ds\right) \right.,  \\ \left.t\in (s_{i},t_{i+1}]\,,i=0,1,...,m\right. 
\end{array}
\end{array}
\right. 
\end{equation*}
with $\Psi ^{\gamma }\left( t,0\right) =\displaystyle\frac{\left( \psi \left( t\right) -\psi \left( 0\right) \right) ^{\gamma -1}}{\Gamma \left( \gamma \right) }$ and $N_{\psi }^{\alpha }(t,s)=\psi ^{\prime }\left( s\right)\left( \psi \left( t\right) -\psi \left( s\right) \right) ^{\alpha -1}$.

Then, we introduce the concept of Ulam--Hyers--Rassias stability for Eq.(\ref{1}), motivated by the concepts of stability \cite{princi,princi1,princi2}.

Let $\delta \geq 0$ and $\varphi \in PC_{\gamma,\psi}(J,\mathbb{R}_{+})$ \textbf{be} a nondecreasing function. Consider
\begin{equation}
\left\vert {}^{H}\mathbb{D}_{s_{i},0^{+}}^{\alpha ,\beta ,\psi }y(t)-f\left(t,y(t),\frac{1}{\Gamma (\alpha )}\int_{0}^{t}\mathcal{W}_{\psi }^{\alpha
}(t,s,y(s))ds\right) \right\vert \leq \varphi (t),\,\,t\in (s_{i},t_{i+1}],\,i=0,1,...,m, \label{3}
\end{equation}
and 
\begin{equation}
\left\vert y(t)-g_{i}\left( t,y(t),\dfrac{1}{\Gamma (\alpha )}\int_{0}^{t}\mathcal{M}_{\psi }^{\alpha }(t,s,y(s))ds\right) \right\vert \leq \varepsilon,\,\,t\in (t_{i},s_{i}]\,,\,i=1,...,m.  \label{3.1}
\end{equation}

\begin{definition} {\rm \cite{princi2}} The {\rm{Eq.(\ref{1})}} is generalized Ulam--Hyers--Rassias stable with respect to $(\varphi ,\delta )$ if there exists $C_{f,g_{i},\varphi}>0$ such that for each solution $y\in PC_{\gamma ,\psi }^{1}(J,\mathbb{R})$ of the {\rm{Eq.(\ref{1})}} we have
\begin{equation}
|y(t)-x(t)|\leq C_{f,g_{i},\varphi }(\varphi (t)+\delta ),\,\,t\in J.
\end{equation}

Obviously, if $y\in PC_{\gamma ,\psi }^{1}(J,\mathbb{R})$ satisfies the inequality {\rm{Eq.(\ref{3})}} then $y$ is a solution the following fractional integral inequality 
\begin{equation}\label{4}
\left\vert y(t)-g_{i}\left( t,y(t),\frac{1}{\Gamma (\alpha )}\int_{0}^{t}\mathcal{M}_{\psi }^{\alpha }(t,s,y(s))ds\right) \right\vert \leq \delta,\,\,t\in (t_{i},s_{i}],\,\,i=0,1,...,m  
\end{equation}%
\begin{equation}\label{5}
\left\vert y(t)-\Psi ^{\gamma }\left( t,0\right) x_{0}-\int_{0}^{t}N_{\psi}^{\alpha }(t,s)f\left( s,y(s),\frac{1}{\Gamma (\alpha )}\int_{0}^{s}\mathcal{W}_{\psi
}^{\alpha }(s,\tau ,y(\tau ))d\tau \right) ds\right\vert \leq I_{0^{+}}^{\alpha ,\psi }\varphi  
\end{equation}%
$t\in \lbrack 0,t_{1}]$ and 
\begin{eqnarray}
y(t)-g_{i}\left( s_{i},y(s_{i}),\frac{1}{\Gamma (\alpha )}\int_{0}^{s_{i}}\mathcal{M}_{\psi }^{\alpha }(t,s,y(s))ds\right)-\frac{1}{\Gamma (\alpha )}\int_{s_{i}}^{t}N_{\psi }^{\alpha }(t,s)f\left(s,y(s),\frac{1}{\Gamma (\alpha )}\int_{0}^{s}\mathcal{W}_{\psi }^{\alpha }(s,\tau
,y(\tau ))d\tau \right) ds &\leq &\delta +I_{s_{i}}^{\alpha ,\psi }\varphi (t) \notag \\
\end{eqnarray}%
$t\in (s_{i},t_{i+1}]$ with $i=1,...,m$.
\end{definition}

For a nonempty set $X$ a function ${\mbox{d}}: X \times X \to [0,\infty]$ is called a metric on $X$ if, and only if, ${\mbox{d}}$ satisfies \cite{ZE3,ZE4}:
\begin{enumerate}
\item ${\mbox{d}}(x,y)=0$ if and only if $x=y$;
\item ${\mbox{d}}(x,y)={\mbox{d}}(y,x)$ for all $x,y \in X$;
\item ${\mbox{d}}(x,z) \leq {\mbox{d}}(x,y) + {\mbox{d}}(y,z)$ for all $x,y,z \in X$
\end{enumerate}

As the Banach fixed point theorem is important in the study of existence, uniqueness, and in the stability of solution of fractional differential equations, it is here after recalled.

\begin{teorema}\label{teo1}{\rm \cite{ZE3,ZE4}} Let $(X,{\mbox{d}})$ be a generalized complete metric space. Assume that $T: X \to X$ is a strictly contractive with constant $L < 1$. If there exists a nonnegative integer $k$ such that ${\mbox{d}}(T^{k+1} x, T^k x) < \infty$ for any $x \in  X$, then the followings are true:
\begin{enumerate}
\item The sequence $\{T^n x\}$ converges to a fixed point $x^{*}$ of $T$;
\item $x^{*}$ is the unique fixed point of $T$ in $X^{*} = \{ y \in X | {\mbox{d}}(T^{*}x,y) < \infty \}$;
\item If $y \in X^{*}$, then ${\mbox{d}}(y,x^{*}) \leq \dfrac{1}{1-L} {\mbox{d}}(Ty,y)$.
\end{enumerate}
\end{teorema}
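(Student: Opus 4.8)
The plan is to run the classical Picard‑iteration proof of the contraction principle, taking the extra care that in a \emph{generalized} metric space ${\mbox{d}}$ may equal $+\infty$, so that every cancellation of a ${\mbox{d}}$‑term must be justified by prior finiteness. Fix any $x\in X$ with ${\mbox{d}}(T^{k+1}x,T^{k}x)<\infty$ and set $x_{0}:=T^{k}x$ and $x_{n}:=T^{n}x_{0}$. First I would iterate strict contractivity to get ${\mbox{d}}(x_{n+1},x_{n})={\mbox{d}}(Tx_{n},Tx_{n-1})\le L\,{\mbox{d}}(x_{n},x_{n-1})\le\cdots\le L^{n}\,{\mbox{d}}(x_{1},x_{0})<\infty$, and then, for $m>n$, ${\mbox{d}}(x_{m},x_{n})\le\sum_{j=n}^{m-1}{\mbox{d}}(x_{j+1},x_{j})\le\dfrac{L^{n}}{1-L}\,{\mbox{d}}(x_{1},x_{0})$, which $\to 0$ as $n\to\infty$. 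Hence $\{x_{n}\}$ is a Cauchy sequence, and by completeness it converges to some $x^{*}\in X$ with ${\mbox{d}}(x_{n},x^{*})<\infty$ for all large $n$; since ${\mbox{d}}(x_{0},x^{*})\le{\mbox{d}}(x_{0},x_{n})+{\mbox{d}}(x_{n},x^{*})<\infty$, we also record that $x^{*}\in X^{*}$.

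Next I would verify that $x^{*}$ is a fixed point: from ${\mbox{d}}(Tx^{*},x^{*})\le{\mbox{d}}(Tx^{*},Tx_{n})+{\mbox{d}}(x_{n+1},x^{*})\le L\,{\mbox{d}}(x^{*},x_{n})+{\mbox{d}}(x_{n+1},x^{*})$ and letting $n\to\infty$, both terms on the right vanish, so $Tx^{*}=x^{*}$; this, together with the convergence established above (applied to the tail of $\{T^{n}x\}$ for any admissible $x$), gives item (1). For item (2), let $y\in X^{*}$ be any fixed point of $T$; then ${\mbox{d}}(y,x^{*})\le{\mbox{d}}(y,T^{k}x)+{\mbox{d}}(T^{k}x,x^{*})<\infty$, hence ${\mbox{d}}(y,x^{*})={\mbox{d}}(Ty,Tx^{*})\le L\,{\mbox{d}}(y,x^{*})$, and finiteness together with $L<1$ forces ${\mbox{d}}(y,x^{*})=0$, i.e. $y=x^{*}$. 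For item (3), any $y\in X^{*}$ again satisfies ${\mbox{d}}(y,x^{*})<\infty$, and ${\mbox{d}}(y,x^{*})\le{\mbox{d}}(y,Ty)+{\mbox{d}}(Ty,Tx^{*})\le{\mbox{d}}(y,Ty)+L\,{\mbox{d}}(y,x^{*})$; subtracting the finite quantity $L\,{\mbox{d}}(y,x^{*})$ yields $(1-L)\,{\mbox{d}}(y,x^{*})\le{\mbox{d}}(Ty,y)$, which is the claimed bound after dividing by $1-L$.

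The step I expect to require the most care is precisely this finiteness bookkeeping: each of the rearrangements above — in the Cauchy estimate, in the fixed‑point identity, and in items (2) and (3) — is legitimate only because the relevant ${\mbox{d}}$‑value has already been shown to be finite, which is exactly why the hypothesis ${\mbox{d}}(T^{k+1}x,T^{k}x)<\infty$ and the restriction to $X^{*}=\{y\in X:{\mbox{d}}(T^{k}x,y)<\infty\}$ are imposed (the basepoint in $X^{*}$ being $T^{k}x$). Beyond this, the argument is the standard Banach contraction‑mapping proof, so no further genuine obstacle is anticipated.
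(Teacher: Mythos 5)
Your proof is correct. Note, however, that the paper does not prove Theorem \ref{teo1} at all: it is quoted from the references \cite{ZE3,ZE4} as the classical Diaz--Margolis fixed point alternative for generalized complete metric spaces, so there is no in-paper argument to compare against; your write-up simply supplies the standard proof that the paper omits. Your argument is the canonical one: Picard iteration started at $x_0=T^k x$, the geometric Cauchy estimate $\mbox{d}(x_m,x_n)\leq \frac{L^n}{1-L}\,\mbox{d}(x_1,x_0)$, completeness, and then the finiteness bookkeeping needed because $\mbox{d}$ may take the value $+\infty$ (in particular $\mbox{d}(T^kx,x^*)<\infty$, which puts $x^*$ in $X^*$ and makes the subtractions in items (2) and (3) legitimate). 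You also correctly read the typo in the statement of item (2), where $X^{*}=\{y\in X\,|\,\mbox{d}(T^{k}x,y)<\infty\}$ is intended, and you correctly observe that convergence of the tail $\{T^{n+k}x\}$ gives convergence of the whole sequence $\{T^{n}x\}$ in item (1). No gaps.
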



\section{Main results}
In this section, we investigate the existence and Ulam--Hyers--Rassias stability of the solution of Eq.(\ref{1}) by using Banach's fixed point theorem.

For the development of this paper, we first introduce some hypotheses

\begin{tabular}{cl}
$H_1.$ & $f \in C(J\times \mathbb{R}\times \mathbb{R}, \mathbb{R})$\\
$H_2.$ & There exists a positive constant $L_f$ such that 
\end{tabular}
\begin{equation*}
|f(t,u_1,v_1) - f(t,u_2,v_2)| \leq L_f (|u_1-u_2|+ |v_1-v_2|)
\end{equation*}
for each $t \in J$ and for all $u_1,u_2,v_1,v_2 \in \mathbb{R}$.

\begin{tabular}{cl}
$H_3.$ & $g_{i}\in C([t_i,s_i] \times \mathbb{R} \times \mathbb{R}, \mathbb{R})$ and $L_{g_{i}}>0$ constants,\\ 
& $i=1,2,\ldots,m$ such that 
\end{tabular}
\begin{equation*}
|g_i(t,u_1,v_1) - g_i(t,u_2,v_2)| \leq L_{g_i} (|u_1-u_2|+ |v_1-v_2|),
\end{equation*}
for each $t\in(t_{i},s_{i}]$ and for all $u_1,u_2,v_1,v_2 \in \mathbb{R}$. 

\begin{tabular}{cl}
$H_4.$ & $k \in C([t_i,s_i] \times \mathbb{R} \times \mathbb{R}, \mathbb{R})$ and $\overline{K}>0$ constant, such that 
\end{tabular}
\begin{equation*}
|k(t,u_1) - k(t,u_2)| \leq \overline{K} |u_1-u_2|
\end{equation*}
for each $t \in J$ and for all $u_1,u_2 \in \mathbb{R}$. 

\begin{tabular}{cl}
$H_5.$ & $\ell \in C(J \times \mathbb{R}, \mathbb{R})$ and $L>0$ constant, such that 
\end{tabular}
\begin{equation*}
|\ell(t,u_1) - \ell(t,u_2)| \leq L |u_1-u_2|
\end{equation*}
for each $t \in J$ and for all $u_1,u_2 \in \mathbb{R}$. 

\begin{tabular}{cl}
$H_6.$ & Let $\varphi \in C(J, \mathbb{R})$ be \textbf{a} nondecreasing function. There exists $C_{\varphi} > 0$ such that 
\end{tabular}
\begin{equation*}
I_{0^{+}}^{\alpha,\psi} \varphi (t) := \frac{1}{\Gamma(\alpha)} \int_0^t N_{\psi}^{\alpha}(t,s) \varphi(s) ds \leq C_{\varphi} \varphi(t)
\end{equation*}
for each $t\in J$.

In what follows we will investigate the existence and generalized stability of Ulam--Hyers--Rassias of the solutions of Eq.(\ref{1}) by means of Banach fixed point theorem (Theorem \ref{teo1}).

\begin{teorema}\label{teo2.1} Assume that the hypotheses $H_{1},H_{2},H_{3},H_{4},H_{5},H_{6}$ are satisfied and a function $y\in PC_{\gamma ,\psi }^{1}(J,\mathbb{R})$, satisfies {\rm{Eq.(\ref{3})}}. Then, there exists a unique solution $y_{0}:J\rightarrow \mathbb{R}
$ such that 
\begin{equation*}
y_{0}(t)=\Psi ^{\gamma }\left( t,0\right) x(0)+\frac{1}{\Gamma (\alpha )}\int_{0}^{t}N_{\psi }^{\alpha }(t,s)f\left( s,y_{0}(s),\frac{1}{\Gamma(\alpha )}\int_{0}^{s}\mathcal{W}_{\psi }^{\alpha }(s,\tau ,y_{0}(\tau ))d\tau \right)ds
\end{equation*}
for $t\in \lbrack 0,t_{1}]$; and 
\begin{equation*}
y_{0}(t)=g_{i}\left( t,y_{0}(t),\frac{1}{\Gamma \left( \alpha \right) }\int_{0}^{t}\mathcal{M}_{\psi }^{\alpha }(t,s,y_{0}(s))ds\right) 
\end{equation*}%
for $t\in (t_{i},s_{i}]$ and $i=1,...,m$, and 
\begin{eqnarray}\label{2.1}
y_{0}(t) &=&g_{i}\left( s_{i},y_{0}(s_{i}),\frac{1}{\Gamma \left( \alpha \right) }\int_{0}^{s_{i}}\mathcal{M}_{\psi }^{\alpha }(t,s,y_{0}(s))ds\right)+ \notag \\
&&\frac{1}{\Gamma (\alpha )}\int_{s_{i}}^{t}N_{\psi }^{\alpha }(t,s)f\left( s,y_{0}(s),\frac{1}{\Gamma (\alpha )}\int_{0}^{s}\mathcal{W}_{\psi }^{\alpha }(s,\tau ,y_{0}(\tau ))d\tau \right) ds 
\end{eqnarray}%
for $t\in (s_{i},t_{i+1}]$, $i=1,...,m$ and 
\begin{equation}
\left\vert y(t)-y_{0}(t)\right\vert \leq \frac{(1+C_{\varphi })(\varphi	(t)+\delta )}{1-\Phi },\,\,t\in J  \label{2.2}
\end{equation}
provided that
\begin{equation} \label{2.3}
\Phi :=\underset{i=1,...,m}{\max }\left\{ 
\begin{array}{c}
\left( LC_{\varphi }+L\dfrac{(\psi (T)-\psi (0))^{\alpha }}{\Gamma (\alpha +1)}+1\right) L_{g_{i}}+\left( \overline{K}\dfrac{(\psi (T)-\psi (0))^{2\alpha }}{\Gamma (\alpha +1)}C_{\varphi }^{2}+\overline{K}C_{\varphi }^{2}+C_{\varphi }\right) L_{f}
\end{array}
\right\} <1.
\end{equation}
\end{teorema}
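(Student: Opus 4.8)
The plan is to recast Eq.~(\ref{1}) as a fixed-point problem and then apply the generalized contraction principle of Theorem~\ref{teo1}. First I would define an operator $\mathcal{T}$ on $PC_{\gamma,\psi}^{1}(J,\mathbb{R})$ by
\[
(\mathcal{T}u)(t)=\Psi^{\gamma}(t,0)x(0)+\frac{1}{\Gamma(\alpha)}\int_{0}^{t}N_{\psi}^{\alpha}(t,s)f\!\left(s,u(s),\frac{1}{\Gamma(\alpha)}\int_{0}^{s}\mathcal{W}_{\psi}^{\alpha}(s,\tau,u(\tau))\,d\tau\right)ds,\quad t\in[0,t_{1}],
\]
by $(\mathcal{T}u)(t)=g_{i}\!\left(t,u(t),\frac{1}{\Gamma(\alpha)}\int_{0}^{t}\mathcal{M}_{\psi}^{\alpha}(t,s,u(s))\,ds\right)$ for $t\in(t_{i},s_{i}]$, and for $t\in(s_{i},t_{i+1}]$ by the sum of the impulse term $g_{i}$ evaluated at $s_{i}$ and the fractional integral of $f$ over $(s_{i},t]$, exactly as in (\ref{2.1}). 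One checks that $\mathcal{T}$ maps $PC_{\gamma,\psi}^{1}(J,\mathbb{R})$ into itself with matching one-sided limits at the $t_{k}$, and that a fixed point of $\mathcal{T}$ is precisely a function $y_{0}$ satisfying the three displayed identities of the statement.

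Next I would equip $PC_{\gamma,\psi}^{1}(J,\mathbb{R})$ with the generalized metric
\[
{\mbox{d}}(u,v)=\inf\{\,C\in[0,\infty]\ :\ |u(t)-v(t)|\le C(\varphi(t)+\delta)\ \text{for all }t\in J\,\},
\]
which makes it a complete generalized metric space in the sense of Theorem~\ref{teo1}. The core of the proof is the contraction estimate ${\mbox{d}}(\mathcal{T}u,\mathcal{T}v)\le \Phi\,{\mbox{d}}(u,v)$ with $\Phi$ as in (\ref{2.3}), carried out interval by interval. On $[0,t_{1}]$ and on $(s_{i},t_{i+1}]$ one estimates $|(\mathcal{T}u)(t)-(\mathcal{T}v)(t)|$ using $H_{2}$ for the outer $f$, then $H_{4}$ (constant $\overline{K}$) to control the inner integral $\mathcal{W}_{\psi}^{\alpha}$, using $\frac{1}{\Gamma(\alpha)}\int_{0}^{t}N_{\psi}^{\alpha}(t,s)\,ds=(\psi(t)-\psi(0))^{\alpha}/\Gamma(\alpha+1)$, monotonicity of $\varphi$ to pull $\varphi(s)\le\varphi(t)$ out of integrals, and repeated use of $H_{6}$; this produces the coefficient $\big(\overline{K}\,\frac{(\psi(T)-\psi(0))^{2\alpha}}{\Gamma(\alpha+1)}C_{\varphi}^{2}+\overline{K}C_{\varphi}^{2}+C_{\varphi}\big)L_{f}$ multiplying ${\mbox{d}}(u,v)(\varphi(t)+\delta)$. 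On $(t_{i},s_{i}]$ one uses $H_{3}$ (constant $L_{g_{i}}$) together with $H_{5}$ (constant $L$) for the inner integral $\mathcal{M}_{\psi}^{\alpha}$, which yields the coefficient $\big(LC_{\varphi}+L\frac{(\psi(T)-\psi(0))^{\alpha}}{\Gamma(\alpha+1)}+1\big)L_{g_{i}}$; the $g_{i}$-impulse term appearing at $t=s_{i}$ in the third branch contributes in the same manner. Taking the maximum over $i$ of the sum of the two coefficients gives exactly $\Phi<1$, so $\mathcal{T}$ is strictly contractive.

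Finally I would show the iteration starts at finite distance. If $y\in PC_{\gamma,\psi}^{1}(J,\mathbb{R})$ satisfies (\ref{3}), then the integral inequalities (\ref{4})--(\ref{5}) recorded in the stability definition above, together with their analogue on $(s_{i},t_{i+1}]$, give $|y(t)-(\mathcal{T}y)(t)|\le \delta+I_{0^{+}}^{\alpha,\psi}\varphi(t)\le (1+C_{\varphi})(\varphi(t)+\delta)$ for all $t\in J$ by $H_{6}$, so ${\mbox{d}}(\mathcal{T}y,y)\le 1+C_{\varphi}<\infty$. Theorem~\ref{teo1}(1) then yields a unique fixed point $y_{0}$ of $\mathcal{T}$, which satisfies the three displayed identities, and Theorem~\ref{teo1}(3) gives ${\mbox{d}}(y,y_{0})\le \frac{1}{1-\Phi}{\mbox{d}}(\mathcal{T}y,y)\le \frac{1+C_{\varphi}}{1-\Phi}$, that is, $|y(t)-y_{0}(t)|\le \frac{(1+C_{\varphi})(\varphi(t)+\delta)}{1-\Phi}$ for $t\in J$, which is (\ref{2.2}). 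I expect the main obstacle to be the bookkeeping in the contraction estimate: tracking how each Lipschitz constant ($L_{f}$, $L_{g_{i}}$, $\overline{K}$, $L$) and each application of $H_{6}$ combine across the nested integrals and the three branches of $\mathcal{T}$ so that precisely the constant $\Phi$ of (\ref{2.3}) emerges, and verifying that $\mathcal{T}$ genuinely preserves $PC_{\gamma,\psi}^{1}(J,\mathbb{R})$.
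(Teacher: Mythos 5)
Your proposal follows essentially the same route as the paper: the same three-branch operator on the piecewise weighted space, a generalized metric built from $\varphi(t)+\delta$, the case-by-case contraction estimate yielding exactly $\Phi$, the finite-distance bound ${\mbox{d}}(\mathcal{T}y,y)\le 1+C_{\varphi}$ from the integral inequalities, and Theorem~\ref{teo1}(3) for the stability estimate (\ref{2.2}). The only cosmetic differences are that the paper splits the metric constant into $C_{1}+C_{2}$ for the two types of subintervals and explicitly verifies that every element of the space lies at finite distance from the starting point (so uniqueness holds in all of $X$), a short step you would want to add; otherwise the arguments coincide.
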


\begin{proof}

The proof of the theorem will be carried out in three cases. First, we consider the space of piecewise weighted functions 
\begin{equation}
X:=\left\{ g:J\rightarrow \mathbb{R}/g\in PC_{\gamma ,\psi }(J,\mathbb{R})\right\}   \label{2,4}
\end{equation}%
and the generalized metric on $X$, defined by 
\begin{equation}
{\mbox{d}}(g,h)=\mathrm{Inf}\left\{ C_{1}+C_{2}\in \lbrack 0,\infty ]/|g(t)-h(t)|\leq (C_{1}+C_{2})(\varphi (t)+\delta ),\,\,t\in J\right\} \label{2.5}
\end{equation}
where 
\begin{equation*}
C_{1}\in \left\{ C\in \lbrack 0,\infty ]/|g(t)-h(t)|\leq C\varphi(t),\,\,t\in (s_{i},t_{i+1}],\,\,i=0,1,...,m\right\} 
\end{equation*}%
and 
\begin{equation*}
C_{2}\in \left\{ C\in \lbrack 0,\infty ]/|g(t)-h(t)|\leq C\delta ,\,\,t\in (t_{i},s_{i}],\,\,i=1,2,...,m\right\}.
\end{equation*}

Note that, the $(X,{\mbox{d}})$ is a complete generalized metric space. 

Also, we introduce the following operator $\Omega :X\rightarrow X$ given by 
\begin{equation*}
(\Omega x)(t)=\Psi ^{\gamma }\left( t,0\right) x(0)+\frac{1}{\Gamma (\alpha )}\int_{0}^{t}N_{\psi }^{\alpha }(t,s)f\left( s,x(s),\frac{1}{\Gamma (\alpha )}\int_{0}^{s}\mathcal{W}_{\psi }^{\alpha }(s,\tau ,x(\tau ))d\tau \right) ds, \text{ } \mbox{for $t\in \lbrack 0,t_{1}]$}
\end{equation*}
and 
\begin{equation*}
(\Omega x)(t)=g_{i}\left( t,x(t),\frac{1}{\Gamma (\alpha }\int_{0}^{t}\mathcal{M}_{\psi }^{\alpha }(t,s,x(s))ds\right) ,\text{ } \mbox{for $t\in (t_{i},s_{i}]$ and $\,i=0,1,...,m$;}
\end{equation*}%
and 
\begin{eqnarray}\label{eq432}
(\Omega x)(t) &=&g_{i}\left( s_{i},x(s_{i}),\frac{1}{\Gamma (\alpha )}\int_{0}^{s_{i}}\mathcal{M}_{\psi }^{\alpha }(t,s,x(s))ds\right)+ \notag \\
&&\frac{1}{\Gamma (\alpha )}\int_{s_{i}}^{t}N_{\psi }^{\alpha }(t,s)f\left(s,x(s),\frac{1}{\Gamma (\alpha )}\int_{0}^{s}\mathcal{W}_{\psi }^{\alpha }(s,\tau ,x(\tau ))d\tau \right) ds 
\end{eqnarray}
for $t\in (s_{i},t_{i+1}]$ and $\,i=0,1,...,m$, for all $x\in X$ and $t\in \lbrack 0,T]$. Note that, the operator $\Omega $, as defined above, is a well defined operator according to $H_{1},H_{3},H_{4}$ and $H_{5}$.

The definition of metric ${\rm d}$ over the space $X$ for any $g,h\in X$, \textbf{allows} to find $C_{1},C_{2}\in \lbrack 0,\infty ]$ such that 
\begin{equation} \label{2.7}
|g(t)-h(t)|\leq \left\{ 
\begin{array}{cc}
C_{1}\varphi \left( t\right)  & t\in (s_{i},t_{i+1}),\,\,i=0,1,...,m, \\ 
C_{2}\delta  & t\in (t_{0},s_{i}),\,\,i=1,...,m.%
\end{array}
\right. 
\end{equation}

By means of the definition of $\Omega $ in {\rm{Eq.(\ref{eq432})}}, $H_{2},H_{3},H_{4},H_{5}$ and {\rm{Eq.(\ref{2.7})}} we have the following cases:

\noindent\textsf{Case 1.} $t \in [0,t_1]$.

\textbf{We} have 
\begin{eqnarray*}
\left\vert (\Omega g)(t)-(\Omega h)(t)\right\vert  &=&\left\vert \frac{1}{\Gamma (\alpha )}\int_{0}^{t}N_{\psi }^{\alpha }(t,s)f\left( s,g(s),\frac{1}{\Gamma (\alpha )}\int_{0}^{s}\mathcal{W}_{\psi }^{\alpha }(s,\tau ,g(\tau ))d\tau\right) - f\left( s,h(s),\frac{1}{\Gamma (\alpha )}\int_{0}^{s}\mathcal{W}_{\psi}^{\alpha }(s,\tau ,h(\tau ))d\tau \right) ds\right\vert ;
\end{eqnarray*}
so,
\begin{eqnarray*}
&&\left\vert (\Omega g)(t)-(\Omega h)(t)\right\vert  \notag \\
&\leq &\frac{1}{\Gamma (\alpha )}\int_{0}^{t}N_{\psi }^{\alpha }(t,s)L_{f}\left\{ |g(s)-h(s)|+\frac{1}{\Gamma (\alpha )}\int_{0}^{s}N_{\psi }^{\alpha }(s,\tau )\left\vert k(\tau ,g(\tau )-k(\tau ,h(\tau ))\right\vert d\tau \right\} ds  \notag \\
&\leq &\frac{1}{\Gamma (\alpha )}\int_{0}^{t}N_{\psi }^{\alpha }(t,s)L_{f}\left\{ C_{1}\varphi (s)+\frac{1}{\Gamma (\alpha )} \int_{0}^{s}N_{\psi }^{\alpha }(s,\tau )\overline{K}\left\vert g(\tau )-h(\tau )\right\vert d\tau \right\} ds  \notag \\
&\leq &\frac{1}{\Gamma (\alpha )}\int_{0}^{t}N_{\psi }^{\alpha }(t,s)L_{f}\left\{ C_{1}\varphi (s)+\frac{C_{1}\overline{K}}{\Gamma (\alpha ) }\int_{0}^{s}N_{\psi }^{\alpha }(s,\tau )\varphi (\tau )d\tau \right\} ds \notag \\
&\leq &\frac{1}{\Gamma (\alpha )}\int_{0}^{t}N_{\psi }^{\alpha }(t,s)L_{f}\left\{ C_{1}\varphi (s)+C_{1}\overline{K}C_{\varphi }\varphi (s)\right\} ds
\end{eqnarray*}
or, 
\begin{eqnarray*}
\left\vert (\Omega g)(t)-(\Omega h)(t)\right\vert  &\leq & \frac{L_{f}C_{1}(1+\overline{K}C_{\varphi})}{\Gamma(\alpha)}   \int_{0}^{t}N_{\psi }^{\alpha}(t,s)\varphi (s)ds \leq L_{f}C_{1}(1+\overline{K}C_{\varphi })C_{\varphi }\varphi (t).
\end{eqnarray*}

Therefore, we have 
\begin{equation*}
|(\Omega g)(t)-(\Omega h)(t)|\leq L_{f}C_{1}(1+\overline{K}C_{\varphi})C_{\varphi }\varphi (t).
\end{equation*}

\noindent {\sf Case 2.} $t \in (t_i,s_i]$.

it holds
\begin{equation*}
\left\vert (\Omega g)(t)-(\Omega h)(t)\right\vert =\left\vert 
\begin{array}{c}
g_{i}\left( t,g(t),\displaystyle\frac{1}{\Gamma (\alpha )}\int_{0}^{t}\mathcal{M}_{\psi }^{\alpha }(t,s,g(s))ds\right) -g_{i}\left( t,h(t),\displaystyle\frac{1}{\Gamma (\alpha )}\int_{0}^{t}\mathcal{M}_{\psi }^{\alpha }(t,s,h(s))ds\right) 
\end{array}
\right\vert ,
\end{equation*}
so, 
\begin{eqnarray*}
\left\vert (\Omega g)(t)-(\Omega h)(t)\right\vert &\leq &L_{g_{i}}\left(|g(t)-h(t)|+\frac{1}{\Gamma (\alpha )}\int_{0}^{t}N_{\psi }^{\alpha
}(t,s)|\ell (s,g(s))-\ell (s,h(s))|ds\right)   \notag \\
&\leq &L_{g_{i}}\left( C_{2}\delta +\frac{1}{\Gamma (\alpha )}\int_{0}^{t}N_{\psi }^{\alpha }(t,s)L(C_{1}+C_{2})(\varphi (s)+\delta)ds\right)   \notag \\
&=&L_{g_{i}}\left[ C_{2}\delta +L(C_{1}+C_{2})\left\{ \frac{1}{\Gamma (\alpha )}\int_{0}^{t}N_{\psi }^{\alpha }(t,s)\varphi (s)ds+\frac{1}{\Gamma
(\alpha )}\int_{0}^{t}N_{\psi }^{\alpha }(t,s)\delta ds\right\} \right]  \notag \\
&\leq &L_{g_{i}}\left[ C_{2}\delta +L(C_{1}+C_{2})\left( C_{\varphi }\varphi (t)+\delta \frac{(\psi (T)-\psi (0))^{\alpha }}{\Gamma (\alpha +1)}\right)\right] .
\end{eqnarray*}

Therefore, we have
\begin{equation*}
|(\Omega g)(t)-(\Omega h)(t)|\leq L_{g_{i}}\left[ C_{2}\delta+L(C_{1}+C_{2})\left( C_{\varphi }\varphi (t)+\delta \frac{(\psi (T)-\psi(0))^{\alpha }}{\Gamma (\alpha +1)}\right) \right].
\end{equation*}

\noindent {\sf Case 3.} For $t \in (s_i,t_{i+1}]$, we have
\begin{eqnarray*}
&&\left\vert (\Omega g)(t)-(\Omega h)(t)\right\vert  \notag \\
&=&\left\vert g_{i}\left( s_{i},g(s_{i}),\frac{1}{\Gamma (\alpha )}\int_{0}^{s_{i}}\mathcal{W}_{\psi }^{\alpha }(t,s,g(s))ds\right) + \frac{1}{\Gamma (\alpha )}\int_{s_{i}}^{t}N_{\psi }^{\alpha }(t,s)f\left( s,g(s),\frac{1}{\Gamma (\alpha )}\int_{0}^{s}\mathcal{W}_{\psi }^{\alpha
}(s,\tau ,g(\tau ))d\tau \right) ds\right\vert  \\
&&-\left\vert g_{i}\left( s_{i},h(s_{i}),\frac{1}{\Gamma (\alpha )}\int_{0}^{s_{i}}\mathcal{M}_{\psi }^{\alpha }(t,s,h(s))ds\right) -\frac{1}{\Gamma (\alpha )}\int_{s_{i}}^{t}N_{\psi }^{\alpha}(t,s)f\left( s,h(s),\frac{1}{\Gamma (\alpha )}\int_{0}^{s}\mathcal{W}_{\psi }^{\alpha
}(s,\tau ,h(\tau ))d\tau \right) ds\right\vert; 
\end{eqnarray*}
\textbf{therefore,}
\begin{eqnarray*}
&&\left\vert (\Omega g)(t)-(\Omega h)(t)\right\vert   \notag \\
&\leq &L_{g_{i}}\left\{ |g(s_{i})-h(s_{i})|+\frac{1}{\Gamma (\alpha )}\int_{0}^{s_{i}}N_{\psi }^{\alpha }(t,s)|\ell (s,g(s))-\ell
(s,h(s))|ds\right\} +  \notag \\
&&\frac{1}{\Gamma (\alpha )}\int_{s_{i}}^{t}N_{\psi }^{\alpha}(t,s)L_{f}\left\{ |g(s)-h(s)|+\frac{1}{\Gamma (\alpha )}\int_{0}^{s}N_{\psi}^{\alpha }(s,\tau )|k(\tau ,g(\tau ))-k(\tau ,h(\tau ))|d\tau \right\} ds \notag \\
&\leq &L_{g_{i}}\left\{ C_{2}\delta +\frac{1}{\Gamma (\alpha )}\int_{0}^{s_{i}}N_{\psi }^{\alpha }(t,s)L|g(s)-h(s)|ds\right\} +  \notag \\
&&\frac{1}{\Gamma (\alpha )}\int_{s_{i}}^{t}N_{\psi }^{\alpha}(t,s)L_{f}\left\{ C_{1}\varphi (s)+\frac{1}{\Gamma (\alpha )}\int_{0}^{s}N_{\psi }^{\alpha }(s,\tau )\overline{K}|g(\tau )-h(\tau )|d\tau\right\} ds  \notag \\
&\leq &L_{g_{i}}\left\{ C_{2}\delta +\frac{L}{\Gamma (\alpha )}\int_{0}^{s_{i}}N_{\psi }^{\alpha }(t,s)(C_{1}+C_{2})(\varphi (s)+\delta)ds\right\} +  \notag \\
&&\frac{L_{f}}{\Gamma (\alpha )}\int_{s_{i}}^{t}N_{\psi }^{\alpha}(t,s)L_{f}\left\{ C_{1}\varphi (s)+\frac{\overline{K}}{\Gamma (\alpha )}\int_{0}^{s}N_{\psi }^{\alpha }(s,\tau )(C_{1}+C_{2})(\varphi (\tau )+\delta)d\tau \right\} ds  \notag \\
&\leq &L_{g_{i}}\left\{ C_{2}\delta +L(C_{1}+C_{2})\left( C_{\varphi}\varphi (t)+\delta \frac{(\psi (T)-\psi (0))^{\alpha }}{\Gamma (\alpha +1)}\right) \right\} +  \notag \\
&&L_{f}\left\{ C_{1}C_{\varphi }\varphi (t)+\overline{K}(C_{1}+C_{2})\left(C_{\varphi }C_{\varphi }\varphi (t)+\delta \frac{(\psi (T)-\psi (0))^{\alpha}}{\Gamma (\alpha +1)}\frac{(\psi (t)-\psi (s_{i}))^{\alpha }}{\Gamma(\alpha +1)}\right) \right\}   \notag \\
&\leq &L_{g_{i}}\left\{ C_{2}\delta +L(C_{1}+C_{2})\left( C_{\varphi}\varphi (t)+\delta \frac{(\psi (T)-\psi (0))^{\alpha }}{\Gamma (\alpha +1)}\right) \right\} +  \notag \\
&&L_{f}\left\{ C_{1}C_{\varphi }\varphi (t)+\overline{K}(C_{1}+C_{2})\left(C_{\varphi }^{2}\varphi (t)+\delta \frac{(\psi (T)-\psi (0))^{2\alpha }}{\Gamma (\alpha +1)}\right) \right\}   \notag \\
&\leq &\left[ 
\begin{array}{c}
\left( LC_{\varphi }+L\displaystyle\frac{(\psi (T)-\psi (0))^{\alpha }}{\Gamma (\alpha +1)}+1\right) L_{g_{i}} \\ 
+\left( \overline{K}\displaystyle\frac{(\psi (T)-\psi (0))^{2\alpha }}{\Gamma (\alpha +1)}C_{\varphi }^{2}+\overline{K}C_{\varphi }^{2}+C_{\varphi }\right) L_{f}%
\end{array}%
\right] \left( C_{1}+C_{2}\right) \left( \varphi (t)+\delta \right) .
\end{eqnarray*}

Then, we have 
\begin{equation}
|(\Omega g)(t)-(\Omega h)(t)|\leq \Phi (C_{1}+C_{2})(\varphi (t)+\delta
),\,\,t\in J  \label{*}
\end{equation}%
with 
\begin{equation*}
\Phi :=\left( LC_{\varphi }+L\frac{(\psi (T)-\psi (0))^{\alpha }}{\Gamma(\alpha +1)}+1\right) L_{g_{i}}+\left( \overline{K}\frac{(\psi (T)-\psi
(0))^{2\alpha }}{\Gamma (\alpha +1)}C_{\varphi }^{2}+\overline{K}C_{\varphi}^{2}+C_{\varphi }\right) L_{f}<1.
\end{equation*}

Therefore, from {\rm{Eq.(\ref{*})}}, we conclude that 
\begin{equation*}
{\mbox{d}}(\Omega g,\Omega h)\leq \Phi {\mbox{d}}(g,h)
\end{equation*}
for all $g,h\in X$, provided the condition given by {\rm{Eq.(\ref{2.3})}}.

Now, consider $g_{0}\in X$. From the piecewise continuous property of $g_{0}$ and $\Omega g_{0}$, exists a constant $0<\widetilde{H_{1}}<\infty$, so that
\begin{equation*}
\left\vert (\Omega g_{0})(t)-g_{0}(t)\right\vert =\left\vert 
\begin{array}{c}
\Psi ^{\gamma }\left( t,0\right) x_{0}+\dfrac{1}{\Gamma (\alpha )}\displaystyle\int_{0}^{t}N_{\psi }^{\alpha }(t,s)
f\left( s,g_{0}(s),\displaystyle\frac{1}{\Gamma (\alpha )}\int_{0}^{s}\mathcal{W}_{\psi }^{\alpha }(s,\tau ,x(\tau ))d\tau \right) ds-y_{0}(t)
\end{array}%
\right\vert \leq \widetilde{H_{1}}(\varphi (t)+\delta )
\end{equation*}
for $t\in \lbrack 0,t_{1}]$. On the other hand, also for $0<\widetilde{H_{2}} <\infty $ and $0<\widetilde{H_{3}}<\infty $, 
\begin{equation*}
|(\Omega g_{0})(t)-g_{0}(t)|=\left\vert g_{i}\left( t,g_{0}(t),\frac{1}{\Gamma (\alpha )}\int_{0}^{t}\mathcal{M}_{\psi }^{\alpha }(t,s,g_{0}(s))ds\right)
-g_{0}(t)\right\vert \leq \widetilde{H_{2}}(\varphi (t)+\delta )
\end{equation*}
for any $t\in (t_{i},s_{i}]$ and $i=1,2,...,m$, and 
\begin{equation*}
\left\vert (\Omega g_{0})(t)-g_{0}(t)\right\vert =\left\vert 
\begin{array}{c}
g_{i}\left( s_{i},g_{0}(s_{i}),\displaystyle\frac{1}{\Gamma (\alpha )}\int_{0}^{s_{i}}\mathcal{M}_{\psi }^{\alpha }(t,s,g_{0}(s))ds\right) +\displaystyle\frac{1}{\Gamma (\alpha )}\int_{s_{i}}^{t}N_{\psi }^{\alpha }(t,s)\times  \\ 
f\left( s,g_{0}(s),\displaystyle\frac{1}{\Gamma (\alpha )}\int_{0}^{s}\mathcal{W}_{\psi }^{\alpha }(s,\tau ,g_{0}(\tau ))d\tau \right)
ds-g_{0}(t)%
\end{array}%
\right\vert \leq \widetilde{H_{3}}(\varphi (t)+\delta )
\end{equation*}
for any $t\in (s_{i},t_{i+1}]$ and $i=1,2,...,m$, since $f,g_{i},g_{0},\displaystyle\frac{1}{\Gamma (\alpha )}\int_{0}^{s_{i}}\mathcal{M}_{\psi }^{\alpha }(t,s,g_{0}(s))ds$ and $\displaystyle\frac{1}{\Gamma (\alpha )}\int_{0}^{s}\mathcal{W}_{\psi }^{\alpha }(s,\tau ,g_{0}(\tau ))d\tau $ are bounded on $J$ and $\varphi (\cdot )+\delta >0$.

Then, {\rm{Eq.(\ref{2.5})}} implies that 
\begin{equation*}
{\rm d}(\Omega g_{0},g_{0})<\infty .
\end{equation*}%
Then, there exists a continuous function $y_{0}:J\rightarrow \mathbb{R}$ such that $\Omega ^{n}g_{0}\rightarrow y_{0}$ in $(X,{\rm d})$ as $n\rightarrow \infty $ and $\Omega y_{0}=y_{0}$ that is, $y_{0}$ satisfies {\rm{Eq.(\ref{2.1})}} for every $t\in J$ by means of Banach fixed point theorem {\rm (Theorem \ref{teo1})}.

Now, the next step, we will prove that 
\begin{equation*}
X=\{g\in X/{\rm d}(g_{0},g)<\infty \}.
\end{equation*}%

\textbf{Let} $g\in X$. Note that $g$ and $g_{0}$ are bounded on $J$ and $\underset{t\in J}{\min }(\varphi (t)+\delta )>0$, then there exists a constant $0<C_{g}<\infty $ such that 
\begin{equation*}
|g_{0}(t)-g(t)|\leq C_{g}(\varphi (t)+\delta )
\end{equation*}
for any $t\in J$. Hence, we have ${\rm d}(g_{0},g)<\infty $ for all $g\in X$ that is 
\begin{equation*}
X=\{g\in X/{\rm d}(g_{0},g)<\infty \}.
\end{equation*}

Finally, we conclude the first part, i.e., $y_{0}$ is the unique continuous function satisfying {\rm{Eq.(\ref{2.1})}}.

On the other hand, by means of {\rm{Eq.(\ref{4})}} and of the hypothesis $H_{6}$, we have
\begin{eqnarray}\label{2.8}
{\rm d}(y,\Omega y) &=&\left\vert 
\begin{array}{c}
y(t)-\Psi ^{\gamma }\left( t,0\right) x_{0} -\displaystyle\frac{1}{\Gamma (\alpha )}\int_{0}^{t}N_{\psi }^{\alpha }(t,s)f\left( s,y(s),\displaystyle\frac{1}{\Gamma (\gamma )}\int_{0}^{s}\mathcal{W}_{\psi }^{\alpha }(s,\tau ,y(\tau )){\rm d}\tau \right) ds\notag
\end{array}%
\right\vert  \\
&\leq &\displaystyle\frac{1}{\Gamma (\alpha )}\int_{0}^{t}N_{\psi }^{\alpha}(t,s)\varphi (s)ds\leq C_{\varphi }\varphi (t)\leq 1+C_{\varphi }.
\end{eqnarray}

Multiplying both sides of {\rm{Eq.(\ref{2.8})}} by $1-\Phi $, we have 
\begin{equation*}
{\rm d}(y,y_{0})\leq \frac{{\rm d}(\Omega y,y)}{1-\Phi }\leq \frac{1+C_{\varphi }}{1-\Phi },\,\,t\in J.
\end{equation*}

By this last expression, we conclude that {\rm{Eq.(\ref{2.2})}} is true.
\end{proof}
\section{Examples}
In this section we will present only two examples, considered as particular cases of fractional integrodifferential equations, specifically one of 
them \textbf{is} the Riemann-Liouville sense (fractional) and another relative to the integer order derivative.  We \textbf{recall} that, our result, 
involving the $\psi$-Hilfer fractional derivative, is the general case, because both Riemann-Liouville and integer order are recovered as particular cases.

First, we consider
\begin{equation}\label{p1}
\left\{ 
\begin{array}{ccc}
^{H}{\mathbb{D}}_{s_{i},0^{+}}^{\alpha ,\beta ,\psi }x(t) & = & f\left( t,x(t),\displaystyle\frac{1}{\Gamma (\alpha )}\displaystyle\int_{0}^{t}\mathcal{W}_{\psi }^{\alpha }(t,s,x(s))ds\right)
,\,\,t\in \lbrack 0,1] \\ 
x(t) & = & g_{i}\left( t,x(t),\displaystyle\frac{1}{\Gamma (\alpha )}\displaystyle\int_{0}^{t}\mathcal{M}_{\psi}^{\alpha }(t,s,x(s))ds\right) ,\,\,t\in (1,2]
\end{array}%
\right. 
\end{equation}
and
\begin{equation}\label{p2}
\left\{ 
\begin{array}{ccc}
\left\vert {}^{H}{\mathbb{D}}_{s_{i},0^{+}}^{\alpha ,\beta ,\psi }y(t)-f\left(t,y(t),\displaystyle\frac{1}{\Gamma (\alpha )}\displaystyle\int_{0}^{t}\mathcal{W}_{\psi }^{\alpha}k(t,s,y(s))ds\right) \right\vert  & \leq  & \mathbb{E}_{\alpha
}(t),\,\,t\in \lbrack 0,1] \\ 
\left\vert y(t)-g_{i}\left( t,y(t),\displaystyle\frac{1}{\Gamma (\alpha )}\displaystyle\int_{0}^{t}\mathcal{M}_{\psi }^{\alpha }(t,s,y(s))ds\right) \right\vert  & \leq  & 1,\,\,t\in (1,2]
\end{array}%
\right. 
\end{equation}
with 
\begin{eqnarray}
f\left( t,x(t),\frac{1}{\Gamma (\alpha )}\int_{0}^{t}\mathcal{W}_{\psi }^{\alpha}(t,s,x(s))ds\right) 
&=&\frac{1}{5+\psi (t)}\left( |x(t)|+\frac{1}{\Gamma (\alpha )}\int_{0}^{t} \psi'(s)(\psi(t)-\psi(s))^{\alpha-1}  \frac{|x(s)|}{10+\psi (s)}ds\right),\,\,t\in \lbrack 0,1]  \notag
\end{eqnarray}%
and 
\begin{equation}
x(t)=\frac{1}{(5+\psi (t))(1+|x(t)|)}\left( |x(t)|\int_{0}^{t}\psi'(s)(\psi(t)-\psi(s))^{\alpha-1}\frac{|x(s)|}{15+\psi (s)}ds\right) ,\,\,t\in (1,2]
\end{equation}
where $\mathbb{E}_{\alpha}(\cdot)=\overset{\infty }{\underset{k=0}{\sum }}\dfrac{t^{k}}{\Gamma \left( \gamma k+1\right) }$ is the one-parameter Mittag-Leffler 
function with $0 < \alpha \leq 1$. Also, relatively to Eq.(\ref{p2}) we simply exchange $x$ for $y$.

\subsection{Riemann-Liouville fractional derivative}

In this case we consider Eq.(\ref{p1}) and Eq.(\ref{p2}) with $J=[0,2]$; $0=t_0=s_0<t_1=1<s_1 = 2$. Taking $\psi(t)=t$, $\alpha = 1/2$, the limit $\beta \to 0$ and 
the following nonlinear functions
\begin{equation*}
f\left( t,x(t),\frac{1}{\Gamma (1/2)}\int_{0}^{t}\mathcal{W}_{t}^{\frac{1}{2}}(t,s,x(s))ds\right) =\frac{1}{5+t}
\left( |x(t)|+\frac{1}{\Gamma (1/2)}\int_{0}^{t}(t-s)^{-\frac{1}{2}}\frac{|x(s)|}{10+s}ds\right) 
\end{equation*}%
with $k(t,x(t))=\displaystyle\frac{|x(t)|}{10+t}$, $\overline{k}=1/10$, $L_{f}=1/5$, $t\in \lbrack 0,1]$ and 
\begin{equation*}
g_{i}\left( t,x(t),\frac{1}{\Gamma (1/2)}\int_{0}^{t}\mathcal{M}_{t}^{\frac{1}{2}}(t,s,x(s))ds\right) =\frac{1}{(5+t)(1+|x(t)|)}\left( |x(t)|+
\frac{1}{\Gamma(1/2)}\int_{0}^{t}(t-s)^{-\frac{1}{2}}\frac{|x(s)|}{15+s}ds\right) 
\end{equation*}%
with $\ell (t,x(t))=\displaystyle\frac{|x(t)|}{15+t}$, $L=1/15$, $L_{g_{i}}=1/5$, $t\in (1,2]$ we get a particular case of Eq.(\ref{p1}) and Eq.(\ref{p2}), 
involving Riemann-Liouville fractional derivative.

We put $\varphi(t) = \mathbb{E}_{\alpha}(t)$ and $\delta = 1$. Then, for $C_{\varphi}=1$ we have 
\begin{equation*}
\frac{1}{\Gamma (1/2)}\int_{0}^{t}N_{t}^{\frac{1}{2}}(t,s)\mathbb{E}_{1/2}(s)ds\leq \mathbb{E}_{1/2}(t).
\end{equation*}

\textbf{By,} a simple but tedious calculation, we get $\Phi = \dfrac{3}{8} < 1$ and by Theorem \ref{teo2.1} there exists a unique solution $y_0: [0,2] \to \mathbb{R}$ such that 
\begin{equation*}
y_{0}(t)=\frac{t^{-1/2}}{\Gamma (1/2)}x(0)+\frac{1}{\Gamma (1/2)}\int_{0}^{t}(t-s)^{-1/2}\frac{1}{5+s}\left( |y_{0}(s)|+
\frac{1}{\Gamma (1/2)}\int_{0}^{s}(s-\tau )^{-1/2}\frac{y_{0}(\tau )}{10+\tau }d\tau \right) ds, \text{ } \mbox{t $\in [0,1]$},
\end{equation*}
and 
\begin{equation}
y_{0}(t)=\frac{1}{(5+t)(1+|y_{0}(t)|)}\left( |y_{0}(t)|+\frac{1}{\Gamma (1/2)}\int_{0}^{t}(t-s)^{-1/2}\frac{y_{0}(s)}{15+s}ds\right), \text{ } \mbox{t $\in (1,2]$}, 
\end{equation}%
and 
\begin{equation}
|y(t)-y_{0}(t)|\leq \frac{(1+C_{\varphi })(\varphi (t)+\delta )}{1-\Phi }=\frac{54}{19}\mathbb{E}_{1/2}(t+1),\,\,t\in \lbrack 0,2].
\end{equation}

\subsection{Integer derivative}
As in the precedent case, consider Eq.(\ref{p1}) and Eq.(\ref{p2}) with $J=[0,2]$; $0=t_0=s_0<t_1=1<s_1 = 2$. Taking $\psi(t)=t$, $\alpha = 1$, $\beta =1/2$ and the 
following nonlinear functions
\begin{equation*}
f\left( t,x(t),\int_{0}^{t}\mathcal{W}_{t}^{1}(t,s,x(s))ds\right) =\frac{1}{5+t}\left(|x(t)|+\int_{0}^{t}N_{t}^{1}(t,s)\frac{|x(s)|}{10+s}ds\right) 
\end{equation*}
with $k(t,x(t))=\displaystyle\frac{|x(t)|}{10+t}$, $\overline{k}=1/10$, $L_{f}=1/5$, $t\in \lbrack 0,1]$ and 
\begin{equation*}
g_{i}\left( t,x(t),\int_{0}^{t}\mathcal{M}_{t}^{1}(t,s,x(s))ds\right) =\frac{1}{%
	(5+t)(1+|x(t)|)}\left( |x(t)|+\int_{0}^{t}N_{t}^{1}(t,s)\frac{|x(s)|}{15+s}%
ds\right) 
\end{equation*}
with $\ell (t,x(t))=\displaystyle\frac{|x(t)|}{15+t}$, $L=1/15$, $ L_{g_{i}}=1/5$, $t\in (1,2]$ we get a particular case of Eq.(\ref{p1}) and Eq.(\ref{p2}), 
involving integer derivative. We put $\varphi (t)=\mathbb{E}_{1}(t)=e^{t}$ and $\delta =1$. Then, for $C_{\varphi }=1$ we have 
\begin{equation*}
\int_{0}^{t}N_{t}^{1}(t,s)\mathbb{E}_{1}(s)ds\leq \mathbb{E}_{1}(t)=e^{t}.
\end{equation*}

Also, here, after a simple and tedious calculation we can show $\Phi =\dfrac{14}{25} < 1$ and by Theorem \ref{teo2.1}, there exists a unique solution $y_0:[0,2] \to \mathbb{R}$ such that
\begin{equation*}
y_{0}(t)=x(0)+\int_{0}^{t}\frac{1}{5+s}\left( |y_{0}(s)|+\int_{0}^{s}\frac{|y_{0}(\tau )|}{1-+\tau }d\tau \right) ds,\,\,t\in \lbrack 0,1]
\end{equation*}
and 
\begin{equation*}
y_{0}(t)=\frac{1}{(5+t)(1+|y_{0}(t)|)}\left( |y_{0}(t)|+\int_{0}^{t}\frac{|y_{0}(s)|}{15+s}ds\right) ,\,\,t\in (1,2]
\end{equation*}
and 
\begin{equation*}
|y(t)-y_{0}(t)|\leq \frac{(1+C_{\varphi })(\varphi (t)+\delta )}{1-\Phi }=\frac{50}{11}(e^{t}+1),\,\,t\in \lbrack 0,2].
\end{equation*}


\section{Concluding Remarks}

We can conclude that the main results of this paper have been successfully
achieved, through Banach's fixed point theorem. We investigated the
existence and Ulam--Hyers--Rassias stability of an impulsive
integrodifferential equation written with the $\psi$-Hilfer fractional
derivative. We hope that this paper will prove to be important to
mathematics, especially for researchers working with problems
involving impulsive fractional differential equations. 

A possible continuation of this research might be finding an answer to this
question: Would it be possible to obtain results similar to the ones we have presented in 
space $L_{p,\alpha}([0,1],\mathbb{R})$ with norm $\Vert (\cdot)
\Vert_{p,\alpha}$ using the $\psi$-Hilfer fractional derivative, Brouwer's fixed
point and/or Schauder's fixed point theorems\cite{wang2017,book1,book2}? 

\section*{Acknowledgment}
We are grateful to Dr. J. Em\'{\i}lio Maiorino for several fruitful discussions and to
the anonymous referees, whose suggestions improved the paper.

\bibliography{ref}

\begin{thebibliography}{10}

\bibitem{exis}
Balachandran K., Kiruthika S., Trujillo J.~J.. Existence results for fractional
  impulsive integrodifferential equations in {B}anach spaces.  {\it Commun.
  Nonlinear Sci. Numer. Simulat. }2011;16(4):1970--1977.

\bibitem{exis1}
Balachandran K., Annapoorani N.. Existence results for impulsive neutral
  evolution integrodifferential equations with infinite delay.  {\it Nonl.
  Analysis. }2009;3(4):674--684.

\bibitem{exis2}
Balachandran K., Kiruthika S., Trujillo J.~J.. Remark on the existence results
  for fractional impulsive integrodifferential equations in {B}anach spaces.
  {\it Commun. Nonlinear Sci. Numer. Simulat. }2012;17(6):2244--2247.

\bibitem{esto1}
Abbas S., Benchohra M., Lagreg J.~E., Alsaedi A., Zhou Y.. Existence and {U}lam
  stability for fractional differential equations of {H}ilfer-{H}adamard type.
  {\it Adv. in Diff. Equa. }2017;2017(1):180.

\bibitem{esto2}
Gou H., Li~B.. Study on {S}obolev type {H}ilfer fractional integro-differential
  equations with delay.  {\it J. Fixed Point Theory Appl. }2018;20(1):44.

\bibitem{esto3}
Sivasankari A., Leelamani A.. Existence of mild solutions for an impulsive
  fractional neutral integro-differential equations with non-local conditions
  in {B}anach spaces..  {\it Nonlinear Studies. }2017;24(3).

\bibitem{gronwall}
Sousa J.~Vanterler da~C., Oliveira E.~Capelas~de. A {G}ronwall inequality and
  the {C}auchy-type problem by means of $\psi$-{H}ilfer operator,
  arXiv:1709.03634, (2017). ;.

\bibitem{exis7}
Gou H., Li~B.. Local and global existence of mild solution to impulsive
  fractional semilinear integro-differential equation with noncompact
  semigroup.  {\it Commun. Nonlinear Sci. Numer. Simulat. }2017;42:204--214.

\bibitem{exis8}
Anguraj A., Karthikeyan P., Rivero M., Trujillo J.~J.. On new existence results
  for fractional integro-differential equations with impulsive and integral
  conditions.  {\it Comput. Math. Appl. }2014;66(12):2587--2594.

\bibitem{esto8}
Li~B., Gou H.. Existence of solutions for impulsive fractional evolution
  equations with periodic boundary condition.  {\it Adv. Diff. Equ.
  }2017;2017(1):236.

\bibitem{ZE1}
Sousa J.~Vanterler da~C., Oliveira E.~Capelas~de. On the $\psi$-{H}ilfer
  fractional derivative.  {\it Commun. Nonlinear Sci. Numer. Simulat.
  }2018;60:72--91.

\bibitem{rassias}
Obloza M.. Hyers stability of the linear differential equation.  {\it
  Rocznik Naukowo-Dydaktyczny. Prace Matematyczne. }1993;13(13):259--270.

\bibitem{rassias3}
Hyers D.~H.. On the stability of the linear functional equation.  {\it Proc.
  National Academy of Sciences. }1941;27(4):222--224.

\bibitem{rassias4}
Rassias T.~M.. On the stability of the linear mapping in {B}anach spaces.  {\it
  Proc. Amer. Math. Soc. }1978;72(2):297--300.

\bibitem{exis4}
Fec M., Zhou Y., Wang J.. On the concept and existence of solution for
  impulsive fractional differential equations.  {\it Commun. Nonlinear Sci.
  Numer. Simulat. }2012;17(7):3050--3060.

\bibitem{exis6}
Aghajani A., Jalilian Y., Trujillo J.~J.. On the existence of solutions of
  fractional integro-differential equations.  {\it Frac. Cal. and Appl. Anal.
  }2012;15(1):44--69.

\bibitem{esto5}
Kumar P., Haloi R., Bahuguna D., Pandey D.~N.. Existence of solutions to a new
  class of abstract non-instantaneous impulsive fractional integro-differential
  equations.  {\it Nonl. Dyn. and Systems Theory. }2016;:73.

\bibitem{esto7}
Benchohra M., Lazreg J.~E.. Existence results for nonlinear implicit fractional
  differential equations with impulse.  {\it Commun. Appl. Anal.
  }2015;19:413--426.

\bibitem{wang2017}
Wang J., Feckan M., Zhou Y.. {\bf Fractional order differential switched
  systems with coupled nonlocal initial and impulsive conditions}.  {\it {\bf
  Bull. Sci. Math}. }2017;141(7):727--746.

\bibitem{yang2018}
Yang D., Wang J., O'Regan D.. {\bf On the orbital Hausdorff dependence of
  differential equations with non-instantaneous impulses}.  {\it {\bf Comptes
  Rendus Mathematique}. }2018;356(2):150--171.

\bibitem{wang2014}
Wang J., Zhou Y., Lin Z.. {\bf On a new class of impulsive fractional
  differential equations}.  {\it {\bf Appl. Math. and Comput}.
  }2014;242:649--657.

\bibitem{ZE3}
Sousa J.~Vanterler da~C., Oliveira E.~Capelas~de. Ulam–-{H}yers stability of
  a nonlinear fractional {V}olterra integro-differential equation.  {\it Appl.
  Math. Lett. }2018;81:50--56.

\bibitem{ZE4}
Oliveira E.~Capelas~de, Sousa J.~Vanterler da~C.. {\bf Ulam--{H}yers--{R}assias
  stability for a class of fractional integro-differential equations}.  {\it
  {\bf Results Math}. }2018;73(3):111.

\bibitem{ZE6}
Sousa J.~Vanterler da~C., Oliveira E.~Capelas~de. {\bf Fractional Order
  Pseudoparabolic Partial Differential Equation: {U}lam--{H}yers Stability}.
  {\it {\bf Bull. Braz. Math. Soc., New Series}. }2018;:1--16.

\bibitem{ZE7}
Sousa J.~Vanterler da~C., Oliveira E.~Capelas~de. {\bf On the
  {U}lam--{H}yers--{R}assias stability for nonlinear fractional differential
  equations using the $\psi$-{H}ilfer operator}.  {\it {\bf J. Fixed Point
  Theory Appl}. }2018;20(3):96.

\bibitem{HER}
Herrmann R.. {\it Fractional {C}alculus: {A}n {I}ntroduction for {P}hysicists}.
\newblock World Scientific Publ. Comp, New Jersey; 2014.

\bibitem{princi}
Lin Z., Wei W., Wang J.. Existence and stability results for impulsive
  integro-differential equations.  {\it Facta Universitatis.
  }2014;29(2):119--130.

\bibitem{princi1}
Rus I.~A.. Ulam stability of ordinary differential equations.  {\it Studia
  Universitatis Babes-Bolyai, Mathematica. }2009;(4).

\bibitem{princi2}
Wang J., Fec M., Zhou Y.. Ulam’s type stability of impulsive ordinary
  differential equations.  {\it J. Math. Anal. and Appl. }2012;395(1):258--264.

\bibitem{book1}
Shamash E.~R.. {\it Fixed Point Theory: {B}anach, {B}rouwer and {S}chauder
  Theorems}.
\newblock California State University, Northridge; 2000.

\bibitem{book2}
Toledano J.~M.~A., Benavides T.~D., Acedo G.. {\it Measures of noncompactness
  in metric fixed point theory}.
\newblock Birkhauser Verlag, Basel; 1997.
\end{thebibliography}
\bibliographystyle{plain}

\end{document}